\documentclass[12pt]{article}%

\usepackage{amsmath,enumerate}
\usepackage{amsfonts}
\usepackage{amssymb}
\usepackage{color}
\usepackage{mathtools}

\setlength{\topmargin}{-.5in}
\setlength{\textheight}{9in}
\setlength{\oddsidemargin}{.125in}
\setlength{\textwidth}{6.25in}

\setcounter{MaxMatrixCols}{30}
\newtheorem{theorem}{Theorem}[section]

\newtheorem{lemma}[theorem]{Lemma}

\newtheorem{proposition}[theorem]{Proposition}

\newenvironment{proof}[1][Proof]{\noindent\textbf{#1.} }
{\hfill \ \rule{0.5em}{0.5em}}

\begin{document}

\title{Degree Ramsey numbers for even cycles}

\date{}

\author{Michael Tait\thanks{Department of Mathematical Sciences, Carnegie Mellon University. \texttt{mtait@cmu.edu}. Research supported by NSF grant DMS-1606350.}}

\maketitle

\abstract{Let $H\xrightarrow{s} G$ denote that any $s$-coloring of $E(H)$ contains a monochromatic $G$. The degree Ramsey number of a graph $G$, denoted by $R_\Delta(G, s)$, is $\min \{\Delta(H): H \xrightarrow{s} G \}$. We consider degree Ramsey numbers where $G$ is a fixed even cycle. Kinnersley, Milans, and West showed that $R_\Delta(C_{2k},s) \geq 2s$, and Kang and Perarnau showed that $R_\Delta(C_4, s) = \Theta(s^2)$. Our main result is that $R_\Delta(C_6, s) = \Theta(s^{3/2})$ and $R_\Delta(C_{10}, s) = \Theta(s^{5/4})$. Additionally, we substantially improve the lower bound for $R_\Delta(C_{2k}, s)$ for general $k$.}

\section{Introduction}

Theorems in Ramsey theory state that if a structure is in some suitable sense ``large enough", then it must contain a fixed substructure. The classical {\em Ramsey number} of a graph $G$, denoted by $R(G,s)$, is the smallest $n$ such that $K_n \xrightarrow{s} G$, where $H\xrightarrow{s} G$ denotes that any $s$-coloring of the edges of $H$ produces a monochromatic subgraph isomorphic to $G$. Classical Ramsey numbers may be thought of in a more general setting, as just one type of parameter Ramsey number. Note that the classical Ramsey number of $G$ is $\min \{ |V(H)|: H\xrightarrow{s} G\}$. For any monotone graph parameter $\rho$, one may define the {\em $\rho$-Ramsey number} of $G$, denoted by $R_\rho(G,s)$, to be
\[
\min\{ \rho(H): H\xrightarrow{s} G\}.
\]

This generalizes the classical Ramsey number as $R_\rho(G,s)$ is the Ramsey number for $G$ when $\rho(H)$ denotes the number of vertices in $H$. The study of parameter Ramsey numbers dates back to the 1970s \cite{BurrErdosLovasz1976}. Since then, many researchers have studied this quantity when $\rho(H)$ is the clique number of $H$ \cite{Folkman1970, LinLi2015, NesetrilRodl1976} (giving way to the study of Folkman numbers), when $\rho(H)$ is the number of edges in $H$ \cite{Beck1983, Beck1990, Dellamonica2012, DonadelliHaxellKohayakawa2005, ErdosEtAl1978, FriedmanPippenger1987, RodlSzemeredi2000} (now called the {\em size Ramsey number}), when $\rho(H) = \chi(H)$ \cite{BurrErdosLovasz1976, Zhu1998, Zhu2011} or when it is the circular chromatic number \cite{jao2013}. In this note we are interested in the {\em degree Ramsey number}, which is when $\rho(H) = \Delta(H)$.

Burr, Erd\H{o}s, and Lov\'asz \cite{BurrErdosLovasz1976} showed that $R_\Delta(K_n, s) = R(K_n, s) - 1$. Kinnersley, Milans, and West \cite{KinnersleyMilansWest2012} and Jiang, Milans, and West \cite{JiangMilansWest2013} proved several theorems regarding the degree Ramsey numbers of trees and cycles. Horn, Milans, and R\"odl showed that the family of closed blowups of trees is $R_\Delta$-bounded (that their degree Ramsey number is bounded by a function of the maximum degree of the graph and $s$). The main open question in this area is whether the set of all graphs is $R_\Delta$-bounded (see \cite{ConlonFoxSudakov2015}).

The main result of this note is to determine the order of magnitude of $R_\Delta(C_6,s)$ and $R_\Delta(C_{10}, s)$. Kang and Perarnau \cite{KangPerarnau2015} showed that $R_\Delta(C_4,s) = \Theta(s^2)$, and for general $k$, the best lower bound on $R_\Delta(C_{2k}, s)$ is by Kinnersley, Milans, and West \cite{KinnersleyMilansWest2012} who show $R_\Delta(C_{2k}, s) \geq 2s$. We substantially improve this lower bound in Theorem \ref{general cycle}. 

As the determination of Ramsey numbers for $C_{2k}$ is closely related to the Tur\'an number for $C_{2k}$, one may find it natural that the order of magnitude for $R_\Delta(C_{2k}, s)$ should be able to be determined for $k \in \{2,3,5\}$ but in no other cases. This is also the current state of affairs for the Tur\'an numbers $\mathrm{ex}(n, C_{2k})$ as well as for the classical Ramsey numbers, where Li and Lih \cite{LiLih2009} showed that $R(C_{2k}, s) = \Theta\left(s^{k/(k-1)}\right)$ for $k\in \{2,3,5\}$. 

Before we state our theorems, we need some preliminary definitions. For graphs $F$ and $G$, a {\em locally injective homomorphism} from $F$ to $G$ is a graph homomorphism $\phi: V(F) \to V(G)$ such that for every $v\in V(F)$, the restriction of $\phi$ to the neighborhood of $v$ is injective. Let $\mathcal{L}_F$ denote the family of all graphs $H$ such that there is a locally injective homomorphism from $F$ to $H$. We say that a graph is $\mathcal{L}_F$-free if it does not contain any $H \in \mathcal{L}_F$. We now state our main theorem.

\begin{theorem}\label{main theorem}
$R_\Delta(C_6, s) = \Theta(s^{3/2})$ and $R_\Delta(C_{10}, s) = \Theta(s^{5/4})$.
\end{theorem}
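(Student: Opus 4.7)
The theorem asserts matching upper and lower bounds of order $s^{k/(k-1)}$ for $k \in \{3,5\}$. The upper bound is the easier half: since $K_n \xrightarrow{s} C_{2k}$ for $n = R(C_{2k},s)$ and $\Delta(K_n)=n-1$, we have $R_\Delta(C_{2k},s) \leq R(C_{2k},s) - 1$, and the Li--Lih estimate $R(C_{2k},s) = O(s^{k/(k-1)})$ (cited above for exactly $k \in \{2,3,5\}$) yields both upper bounds at once. Equivalently, in any $s$-edge-coloring of $K_n$ some color class has at least $\binom{n}{2}/s$ edges, which exceeds $\textup{ex}(n,C_{2k}) = O(n^{1+1/k})$ (Bondy--Simonovits) once $n = \Theta(s^{k/(k-1)})$, forcing a monochromatic $C_{2k}$.

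The substantive content is the lower bound: every $H$ with $\Delta(H) \leq c_k s^{k/(k-1)}$ must admit an $s$-edge-coloring with no monochromatic $C_{2k}$. A direct Lov\'asz Local Lemma applied to the events ``a given $C_{2k}$ in $H$ is monochromatic'' (each of probability $s^{1-2k}$, each sharing an edge with at most $2k \cdot \Delta(H)^{2k-2}$ others) only achieves $\Delta(H) \lesssim s^{(2k-1)/(2k-2)}$, strictly weaker than $s^{k/(k-1)}$. The sharper exponent must therefore exploit something special to $k \in \{3,5\}$, and that special something is the existence of a $C_{2k}$-free graph matching the extremal $n^{1+1/k}$ density: the incidence graph $G^*$ of a generalized quadrangle (for $k=3$) or generalized hexagon (for $k=5$). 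In both cases $G^*$ is $(q+1)$-regular on $\Theta(q^k)$ vertices with girth $2k+2$; crucially, there is no locally injective homomorphism $C_{2k} \to G^*$, because such a hom would be a non-backtracking closed walk of length $2k$ in $G^*$.

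The plan is to use $G^*$ as a template. Fix $q \asymp s^{1/(k-1)}$ so that $s(q+1) \asymp s^{k/(k-1)}$, and for each $i \in [s]$ and $v \in V(H)$ independently pick a uniform random label $\phi_i(v) \in V(G^*)$. Say color $i$ is \emph{admissible} for edge $uv$ if $\phi_i(u)\phi_i(v) \in E(G^*)$, and attempt to assign each edge some admissible color. Each color class $H_i$ then admits a graph homomorphism to $G^*$ sending edges to edges; if this homomorphism is furthermore locally injective on $H_i$, any would-be $C_{2k}$ in $H_i$ would push forward to a locally injective hom $C_{2k} \to G^*$, which is ruled out by the girth of $G^*$. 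Thus local injectivity at every vertex for every color would complete the proof. This is where the locally-injective-homomorphism language foreshadowed in the introduction enters the argument.

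Two families of bad events must therefore be avoided simultaneously: (i) some edge $uv$ is inadmissible in every color; and (ii) some vertex $v$ has two color-$i$ neighbors $u,u'$ with $\phi_i(u) = \phi_i(u')$ (a local injectivity failure). Type-(i) events have probability $\bigl(1 - \Theta(q^{1-k})\bigr)^s$, which is $o(1)$ for the correct choice of $q$; type-(ii) events are controlled by a birthday-style estimate inside the $(q+1)$-element set $N_{G^*}(\phi_i(v))$, using that the expected number of color-$i$ edges at $v$ is $\Delta(H)/s = O(q+1)$. The main obstacle is executing the LLL at the tight exponent: a single label $\phi_i(v)$ couples all color-$i$ admissibility events at $v$, so the naive dependency graph is dense. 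I expect the argument to require a two-stage scheme---sampling the $\phi_i$'s first, then running LLL on a cleanly decoupled family of per-edge admissibility and per-vertex collision events, and finally a small deletion step to rehandle a $o(1)$ fraction of edges that remain uncolored via an auxiliary direct coloring---so as to preserve the exponent $k/(k-1)$ rather than slipping back to $(2k-1)/(2k-2)$.
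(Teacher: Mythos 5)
Your upper bound is exactly the paper's argument (pigeonhole plus Bondy--Simonovits), and it is fine. The lower bound has the right philosophy --- generalized quadrangle/hexagon incidence graphs as templates, $C_{2k}$-freeness certified by a locally injective homomorphism into a high-girth graph --- but the core probabilistic mechanism fails, and not for the reason you flag. The fatal issue is not the dense dependency graph but the collision events themselves. With $q \asymp s^{1/(k-1)}$ and $\Delta(H) \asymp s^{k/(k-1)} \asymp s(q+1)$, local injectivity into the $(q+1)$-regular template $G^*$ caps every color class at degree $q+1$ per vertex, so your scheme must run at essentially full capacity: a typical vertex $v$ has $\Theta(q)$ color-$i$ neighbors, and, conditioned on admissibility, their labels are independent uniform points of the $(q+1)$-element set $N_{G^*}(\phi_i(v))$. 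The probability that $\Theta(q)$ uniform samples from $q+1$ bins are pairwise distinct is $e^{-\Theta(q)}$; your ``birthday-style estimate'' goes the wrong way, and the type-(ii) events occur with probability $1-o(1)$, not $o(1)$. No form of the local lemma or a deletion step for ``a $o(1)$ fraction of edges'' recovers from bad events of probability exponentially close to $1$. If instead you enlarge $q$ so that the per-color degree $\Delta/s$ is $O(\sqrt{q})$ (making collisions unlikely), the per-color admissibility probability $\Theta(q^{1-k})$ forces $sq^{1-k} = o(1)$, and almost no edge gets a color. Balancing the two constraints caps your scheme at $\Delta \lesssim s\sqrt{q} \lesssim s^{1+\frac{1}{2k-2}}$, which is exactly the exponent $\frac{2k-1}{2k-2}$ of the naive LLL you set out to beat: for $C_6$ this is $s^{5/4}$, not $s^{3/2}$.

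The paper's route eliminates both of your bad-event families deterministically rather than probabilistically, and this requires an ingredient your plan is missing. First (Theorem \ref{ross theorem}, implicit in Kang--Perarnau), one finds a \emph{single} proper rainbow vertex coloring $\chi$ of a spanning subgraph of $G$ using $200\Delta$ colors while retaining a tenth of every degree; because the palette is much larger than $\Delta$, rainbow-ness is attainable and there is no per-color birthday problem. Second --- the real content --- one partitions the edge set of the complete graph $K_{200\Delta}$ \emph{on the palette} into $O(\Delta^{2/3})$ graphs of girth $8$ (resp.\ $O(\Delta^{4/5})$ of girth $12$). This is Proposition \ref{complete graph partition}: the coordinatized Lazebnik--Ustimenko graphs admit translates (the graphs $Q_{\alpha_2,\alpha_3}$, resp.\ $H_{\beta_2,\beta_3,\beta_4,\beta_5}$) whose $q^2$ (resp.\ $q^4$) shifts exactly tile $K_{q^3,q^3}$ (resp.\ $K_{q^5,q^5}$), followed by a recursive halving to pass from bipartite to complete. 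Putting $uv$ in class $i$ when $\chi(u)\chi(v) \in E(G_i)$ then colors \emph{every} retained edge in exactly one class (the $G_i$ partition all pairs of labels, so your type-(i) events vanish), and $\chi$ is automatically a locally injective homomorphism from each class into $G_i$ (so your type-(ii) events vanish); an iteration over $O(\log \Delta)$ rounds handles the edges discarded by the rainbow lemma. In short, a single template with $s$ independent random labelings cannot reach the tight exponent; it must be replaced by a partition of the whole complete graph into shifted high-girth templates, and that algebraic partition is precisely the paper's new contribution.
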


To prove this theorem, we first show that the complete graph can be partitioned ``efficiently" into graphs coming from the generalized quadrangle and generalized hexagon. Then we use the following general theorem, which is implicit in the work of Kang and Perarnau \cite{KangPerarnau2015}. 

\begin{theorem}[Kang-Perarnau \cite{KangPerarnau2015}]\label{ross theorem}
Let $F$ be a graph with at least one cycle and $\epsilon > 0$ be fixed and let $G$ be a graph of maximum degree $\Delta$. If the edges of $K_n$ can be partitioned into $O\left(n^{1-\epsilon}\right)$ $\mathcal{L}_F$-free graphs, then $G$ can be partitioned into $O\left(\Delta^{1-\epsilon} \right)$ graphs which are $F$-free.
\end{theorem}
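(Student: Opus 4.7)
The plan is to pull the assumed partition of $K_N$ back to $G$ by a uniformly random vertex map $\phi$, and then to iterate the construction on a low-degree exceptional subgraph formed from the ``bad'' monochromatic copies of $F$.

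First, fix a constant $C = C(F,\epsilon)$ and set $N = C\Delta$. The hypothesis yields a partition $E(K_N) = E(H_1) \sqcup \cdots \sqcup E(H_t)$ with each $H_i$ being $\mathcal{L}_F$-free and $t \le c_0 N^{1-\epsilon} = O(\Delta^{1-\epsilon})$. Let $\phi : V(G) \to V(K_N)$ be uniform and independent on vertices, and color each edge $uv \in E(G)$ by the unique index $i$ for which $\phi(u)\phi(v) \in E(H_i)$, treating any edges with $\phi(u) = \phi(v)$ as exceptional.

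The crucial point is that a monochromatic copy of $F$ forces $\phi$ to fail local injectivity somewhere: a copy of $F$ on $W \subseteq V(G)$ in color class $i$ gives a graph homomorphism $\phi|_W : F \to H_i$, and if this homomorphism were locally injective, its image would witness $H_i \in \mathcal{L}_F$, contradicting $\mathcal{L}_F$-freeness. Thus every bad copy contains distinct vertices $u,u'$ sharing a common neighbor in $F$ (and hence in $G$) with $\phi(u) = \phi(u')$. Since each such collision has probability $1/N$, a union bound over the $O(|V(F)|^2)$ relevant pairs gives that any fixed copy of $F$ in $G$ is bad with probability $O(1/N)$.

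From each bad $F$-copy I would delete one edge, combine these with the exceptional loop-edges, and call the resulting graph $G^\star$. A concentration argument -- for example the Lov\'asz Local Lemma or a Talagrand-style estimate applied to the events ``$\phi|_{N(v)}$ has a collision'' -- allows $C$ to be chosen large enough that $\Delta(G^\star) \le \Delta/2$ with positive probability. The $t = O(\Delta^{1-\epsilon})$ non-exceptional color classes are then $F$-free by the previous paragraph, and by induction on $\Delta$ applied to $G^\star$ we partition it into $O((\Delta/2)^{1-\epsilon})$ further $F$-free graphs. Since $\sum_{j \ge 0} (2^{-j})^{1-\epsilon}$ converges, the total number of color classes is $O(\Delta^{1-\epsilon})$, as required.

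The principal obstacle is the concentration step: bounding the \emph{average} number of deletions incident to a given vertex is straightforward, but to close the induction one needs the \emph{maximum} degree of $G^\star$ to be bounded by $\Delta/2$ simultaneously across all vertices of $G$. This requires a carefully chosen concentration inequality with the right dependency structure for the events indexed by $F$-copies of $G$, together with a matching choice of the constant $C$ in $N = C\Delta$. Once that estimate is in place, the induction closes and yields the claimed $O(\Delta^{1-\epsilon})$ bound.
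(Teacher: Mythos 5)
Your core mechanism is the same as the paper's: map $V(G)$ into the vertex set of a complete graph on $O(\Delta)$ vertices, pull back the assumed partition, and use local injectivity to convert $\mathcal{L}_F$-freeness of the classes into $F$-freeness of their preimages (this is Proposition \ref{large piece} in the paper). The gap is in how you repair the failures of local injectivity. First, the rule ``delete one edge from each bad $F$-copy'' does not control $\Delta(G^\star)$. Writing $f = |V(F)|$, a fixed edge of $G$ can lie in $\Theta(\Delta^{f-2})$ copies of $F$ (take $G = K_{\Delta+1}$, say), and each copy is bad with probability $\Theta(1/(C\Delta))$, so for $f \geq 4$ the expected number of bad copies containing a fixed edge is $\Omega(\Delta^{f-3}/C) \gg 1$. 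Since your rule does not specify which edge of a bad copy gets deleted, the deleted set can swallow essentially every edge of $G$, and no choice of $C$ repairs this. The deletions must instead be tied to the collisions themselves (e.g.\ delete $uw$ whenever $u$ has a collision partner inside $N(w)$), so that the quantity you must control at each vertex is the number of collisions in its neighborhood, whose expectation is $O(\Delta/C)$.

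Second, and more seriously, the concentration step you defer is aimed at events that cannot be avoided. With $N = C\Delta$ colors, a vertex $v$ of degree $\Delta$ has $\binom{\Delta}{2}/N = \Theta(\Delta/C)$ expected collision pairs in its neighborhood, so by the birthday paradox the event ``$\phi|_{N(v)}$ has a collision'' holds with probability $1 - e^{-\Theta(\Delta/C)} \to 1$. No local lemma or Talagrand-style estimate can make all of these events fail; avoiding collisions outright would force $N = \Omega(\Delta^2)$, destroying the exponent $1-\epsilon$. What is actually true is weaker: one can delete edges at the collisions, losing only a constant fraction of every degree, so that the surviving spanning subgraph carries a proper coloring that is injective on every neighborhood. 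That is precisely Lemma \ref{ross lemma} (quoted by the paper from Kang--Perarnau): a spanning subgraph $H$ with $d_H(v) \geq \frac{1}{10} d_G(v)$ and a proper rainbow coloring with at most $200\Delta$ colors. Proving this lemma is the real technical content, and your sketch leaves it open while pointing the concentration machinery at the wrong family of events. Given the lemma, the paper closes the argument not by a factor-$2$ induction but by iterating $O(\log\Delta)$ times (the uncovered degree drops by a factor of $\frac{9}{10}$ per round), at each round stripping off vertices of degree below $\log^2\Delta$ and covering that low-degeneracy part by $\log^2\Delta$ forests, which are $F$-free because $F$ contains a cycle; the geometric sum then yields $O(\Delta^{1-\epsilon})$ classes in total.
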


They did not state their theorem in this way, and for completeness we sketch its proof in Section \ref{ross section}. Stating it in this general way allows us to improve the result of Kinnersley, Milans, and West \cite{KinnersleyMilansWest2012}. 

\begin{theorem}\label{general cycle}
Let $k$ be fixed and $\delta = 0$ if $k$ is odd and $\delta=1$ if $k$ is even. Then
\[
R_\Delta(C_{2k}, s) = \Omega\left(\left(\frac{s}{\log s}\right)^{1+\frac{2}{3k-5+\delta}}\right).
\]
\end{theorem}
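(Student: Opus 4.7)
The plan is to combine a known graph construction of Lazebnik, Ustimenko and Woldar (LUW) with a random covering argument, and then to invoke Theorem~\ref{ross theorem}. Set $\epsilon:=2/(3k-3+\delta)$, so that $1/(1-\epsilon)=(3k-3+\delta)/(3k-5+\delta)=1+2/(3k-5+\delta)$. By Theorem~\ref{ross theorem} with $F=C_{2k}$, it is enough to show that, for every sufficiently large $n$, the edges of $K_n$ can be partitioned into $O(n^{1-\epsilon}\log n)$ parts, each of which is $\mathcal{L}_{C_{2k}}$-free; the logarithmic overhead then propagates, through the proof sketched in Section~\ref{ross section}, to the $\log s$ in the denominator of the stated bound.

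For the input I will use the LUW construction, which produces, along an infinite set of $n$, a bipartite graph $G_n$ on $n$ vertices whose girth strictly exceeds $2k$ and whose edge count is $\Omega(n^{1+2/(3k-3+\delta)})$. Because $G_n$ has girth $>2k$, any non-backtracking closed walk in $G_n$ of length $2k$ would trace out a $C_{2k}$, which does not exist; hence $G_n$ and every subgraph of it are $\mathcal{L}_{C_{2k}}$-free. By Bertrand's postulate (to interpolate over $n$) together with padding by isolated vertices, one can take $G_n$ to exist with $|E(G_n)|=\Omega(n^{1+\epsilon})$ for every sufficiently large $n$. Set $\gamma:=|E(G_n)|/\binom{n}{2}=\Omega(n^{\epsilon-1})$.

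Next, the random covering. Choose independent uniformly random permutations $\sigma_1,\dots,\sigma_r$ of $V(K_n)$ with $r=\lceil Cn^{1-\epsilon}\log n\rceil$ for a sufficiently large absolute constant $C$, and assign each edge $e=\{u,v\}$ of $K_n$ to the smallest index $i$ such that $\{\sigma_i(u),\sigma_i(v)\}\in E(G_n)$, leaving $e$ uncolored if no such $i$ exists. For any fixed edge $e$, the event that $\sigma_i$ covers $e$ occurs with probability $\gamma$ (since $\sigma_i$ sends $e$ to a uniformly random unordered pair), and these events are independent in $i$. Hence the expected number of uncolored edges is at most $\binom{n}{2}(1-\gamma)^r\le n^2 e^{-r\gamma}<1$ once $C$ is chosen sufficiently large. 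Fix any realization covering every edge; the $i$th color class is contained in $\sigma_i^{-1}(G_n)\cong G_n$ and is therefore $\mathcal{L}_{C_{2k}}$-free. Feeding this partition into Theorem~\ref{ross theorem} produces, for every graph $H$ of maximum degree $\Delta$, a $C_{2k}$-free edge-partition of $H$ into $O(\Delta^{1-\epsilon}\log\Delta)$ parts; setting this at most $s$ and solving for $\Delta$ yields $R_\Delta(C_{2k},s)=\Omega((s/\log s)^{1/(1-\epsilon)})$, which is the stated bound.

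The main obstacle is arranging the input to Theorem~\ref{ross theorem}: one must verify that the LUW graphs are genuinely $\mathcal{L}_{C_{2k}}$-free, and not merely $C_{2k}$-free. This is where the full girth guarantee of the LUW construction---no cycle of length at most $2k+1$, equivalently girth at least $2k+2$ in the bipartite setting---is indispensable, since non-backtracking closed walks of length $2k$ can in principle be built from shorter cycles traversed repeatedly. The remaining ingredients, namely the first-moment random covering and the bookkeeping that carries the polylogarithmic factor through Theorem~\ref{ross theorem}, are routine.
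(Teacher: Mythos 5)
Your proposal is correct and follows essentially the same route as the paper: reduce via Theorem~\ref{ross theorem} (with the logarithmic overhead carried through its proof) to covering $K_n$ by $O(n^{1-\epsilon}\log n)$ graphs of girth greater than $2k$, and produce that cover by placing $O(n^{1-\epsilon}\log n)$ randomly permuted copies of the Lazebnik--Ustimenko--Woldar graphs. The only differences are cosmetic: you use a first-moment bound on uncovered edges where the paper uses a Chernoff bound plus a union bound, and you make explicit the prime-density/padding and cover-to-partition bookkeeping that the paper leaves implicit.
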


We prove our main theorem in Section \ref{main section}. We sketch the proof of Theorem \ref{ross theorem} and use it to prove Theorem \ref{general cycle} in Section \ref{ross section}. 

\section{Proof of Theorem \ref{main theorem}}\label{main section}
The theorem follows from Theorem \ref{ross theorem} and the following proposition which we prove after the proof of Theorem \ref{main theorem}.

\begin{proposition}\label{complete graph partition}
The edge set of $K_n$ may be partitioned into $O(n^{2/3})$ graphs of girth $8$ or $O(n^{4/5})$ graphs of girth $12$.
\end{proposition}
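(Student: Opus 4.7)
The plan is to construct the two partitions using classical finite geometries: the symplectic generalized quadrangle $W(q)$ for the girth-$8$ case and the split Cayley generalized hexagon $H(q)$ for the girth-$12$ case. Both are incidence geometries whose bipartite point-line incidence graphs are $(q+1)$-regular: for $W(q)$, on $2(q+1)(q^2+1)$ vertices with girth exactly $8$; for $H(q)$, on $2(q+1)(q^4+q^2+1)$ vertices with girth exactly $12$. Writing $n$ for the vertex count in each construction, these graphs have $\Theta(n^{4/3})$ and $\Theta(n^{6/5})$ edges respectively, matching the Bondy--Simonovits extremal order for $C_6$-free and $C_{10}$-free graphs. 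Consequently, partitioning $E(K_n)$ into $\Theta(n^{2/3})$ (respectively $\Theta(n^{4/5})$) such subgraphs is the best possible rate, and is exactly what the proposition asserts.

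To realize such a partition I would exploit the fact that $W(q)$ and $H(q)$ both admit Singer-type cyclic collineation subgroups. Fixing such a cyclic group $G$ of order $n$, the incidence graph becomes a Cayley graph $\mathrm{Cay}(G,S)$ with symmetric connection set $S\subseteq G\setminus\{0\}$ of size $\Theta(n^{1/3})$ or $\Theta(n^{1/5})$. Since the girth of $\mathrm{Cay}(G,S)$ is at least $8$ (respectively $12$), the same holds for $\mathrm{Cay}(G,\sigma(S))$ for every automorphism $\sigma$ of $G$, because $\sigma$ induces a graph isomorphism. It then suffices to pick automorphisms $\sigma_1,\dots,\sigma_k$ with $k=O(n^{2/3})$ (respectively $O(n^{4/5})$) such that $\sigma_1(S),\dots,\sigma_k(S)$ partition $G\setminus\{0\}$ (with each $\{g,-g\}$ treated as a single edge class); the associated Cayley graphs then edge-partition $K_n$. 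Arbitrary $n$ is handled by taking the nearest special vertex count $n'\in[n,Cn]$ coming from the next prime power and restricting the decomposition of $K_{n'}$ to an $n$-vertex subset, using the density of prime powers.

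The main obstacle will be the tiling step---showing that $G\setminus\{0\}$ really decomposes into $O(n^{2/3})$ (respectively $O(n^{4/5})$) automorphism-translates of $S$. In the classical constructions this reduces to an explicit coset calculation: the Singer group of $W(q)$ or $H(q)$ can be identified with the additive group of a finite field $\mathbb{F}_{q^e}$ (with $e=3$ or $5$), and $S$ can be chosen as a union of cosets of an appropriate multiplicative subgroup $H\leq\mathbb{F}_{q^e}^*$ arising from the subfield $\mathbb{F}_q$, so that the partition into cosets of $H$ is immediate and produces the required number of parts. If a clean coset structure is not directly available for the polygon in use, the same conclusion follows from a probabilistic covering argument: random automorphisms produce an almost-partition of $G\setminus\{0\}$ with bounded multiplicity, and a short deletion or alteration step absorbs the overcounts into $o(n^{2/3})$ (respectively $o(n^{4/5})$) trivial matching layers without affecting the order of magnitude. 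Once this tiling is in hand, the rest of the proof is routine bookkeeping through the Cayley structure.
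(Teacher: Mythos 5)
There is a genuine gap, and it sits exactly where you flagged it: the tiling step. Your whole construction rests on the claim that the point-line incidence graphs of $W(q)$ and the split Cayley hexagon are Cayley graphs $\mathrm{Cay}(G,S)$ on a cyclic ``Singer-type'' group of order equal to the full vertex count. Unlike Desarguesian projective planes (Singer's theorem), generalized quadrangles and hexagons are not known to admit such groups; for the bipartite incidence graph to be a Cayley graph you would need a group acting regularly on points together with lines, which in particular forces a duality interchanging the two sides, and known results on (abelian) Singer groups of generalized quadrangles severely restrict or rule out the classical examples. Your fallback identification of this group with the additive group of $\mathbb{F}_{q^e}$ is also internally inconsistent: that group is elementary abelian, not cyclic, and has order $q^e$ rather than $2(q+1)(q^2+1)$ or $2(q+1)(q^4+q^2+1)$. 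So the structure your argument needs is assumed, not established. The probabilistic fallback does not rescue it quantitatively: with only $k=O(n^{2/3})$ random translates of a set $S$ of size $\Theta(n^{1/3})$, each element of $G\setminus\{0\}$ is missed with probability roughly $(1-|S|/n)^k = e^{-\Theta(1)}$, so a constant \emph{fraction} of $G\setminus\{0\}$ remains uncovered and your ``alteration'' would need $\Theta(n)$ extra layers. Driving the uncovered set down to $o(n^{2/3})$ forces $k=\Theta(n^{2/3}\log n)$, and that logarithmic loss is fatal here: the log-free bound is precisely the content of the proposition (the paper's Theorem \ref{general cycle} already gives the lossy version for every $k$), and it is what makes the final bounds $\Theta(s^{3/2})$ and $\Theta(s^{5/4})$ tight.

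For contrast, the paper avoids both obstacles by working with the coordinatized Lazebnik--Ustimenko graphs $Q$ and $H$ (induced subgraphs of the generalized quadrangle and hexagon incidence graphs) whose vertex sets are $\mathbb{F}_q^3\cup\mathbb{F}_q^3$ and $\mathbb{F}_q^5\cup\mathbb{F}_q^5$ and whose adjacencies are triangular systems of equations. Shifting the point coordinates $p_2,p_3$ by an arbitrary $(\alpha_2,\alpha_3)\in\mathbb{F}_q^2$ (respectively $p_2,\dots,p_5$ by $(\beta_2,\dots,\beta_5)\in\mathbb{F}_q^4$) yields isomorphic girth-$8$ (girth-$12$) graphs, and because the shifted system has a \emph{unique} solution in the shift parameters for every fixed pair of vertices, the $q^2$ (respectively $q^4$) translates exactly partition $K_{q^3,q^3}$ (respectively $K_{q^5,q^5}$) --- no probabilistic covering, hence no log factor. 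A recursive halving argument (cover the crossing edges of a balanced bipartition, recurse on the two halves, sum the geometric series) then converts the bipartite partition into a partition of $K_n$, a step your proposal also leaves unaddressed since the claimed Cayley structure on all of $K_n$ was meant to make it unnecessary. If you want to salvage your approach, you essentially need a group of automorphisms of some girth-$8$ (girth-$12$) graph whose translates provably tile the complete (bipartite) graph; the paper's shifted systems are exactly such a group, engineered so that the tiling is a one-line linear-algebra fact.
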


\begin{proof}[Proof of Theorem \ref{main theorem}]
Showing that $R_\Delta(G, s) \geq k$ is equivalent to showing that any graph of maximum degree at most $k$ may be partitioned into $s$ graphs each of which are $G$-free. We notice that if there is a locally injective homomorphism from $C_n$ to a graph $H$, then $H$ must contain a cycle of length at most $n$. Therefore, if a graph has girth $g$, then it is $\mathcal{L}_{C_{n}}$-free for any $n\leq g$.

Therefore, by Proposition \ref{complete graph partition} and Theorem \ref{ross theorem}, if $G$ is a graph of maximum degree $\Delta$, then $G$ can be partitioned into $O(\Delta^{2/3})$ graphs which are $C_6$-free or $O(\Delta^{4/5})$ graphs which are $C_{10}$-free. This shows that $R_\Delta(C_6, s) = \Omega(s^{3/2})$ and $R_\Delta(C_{10}, s) = \Omega(s^{5/4})$.

The upper bound follows from the classical even-cycle theorem of Erd\H{o}s, that $\mathrm{ex}(n, C_{2k}) = O(n^{1+1/k})$ (cf \cite{BondySimonovits1974}). If $E(K_n)$ is colored with $s$ colors, then one color class contains at least $\binom{n}{2}/s$ edges. Therefore, for a constant $c_k$ depending only on $k$, if $c_k n^{1-1/k} > s$, then any $s$-coloring of $K_n$ contains a monochromatic $C_{2k}$. This implies 
\[
R_\Delta(C_{2k}, s) \leq \left(\frac{s}{c_k}\right)^{1 + \frac{1}{k-1}} - 1.
\]

We note that the best constant $c_k$ that is known comes from the current best-known upper bound for $\mathrm{ex}(n, C_{2k})$ by Bukh and Jiang \cite{BukhJiang2014}.
\end{proof}

\medskip

\begin{proof}[Proof of Proposition \ref{complete graph partition}]
We use the generalized quadrangle and the generalized hexagon to partition $K_n$ efficiently into graphs of girth $8$ and $12$ respectively. Let $q\geq 5$ be a prime and $\mathbb{F}_q$ the field of order $q$. We define bipartite graphs $Q$ and $H$. Let $V(Q) = \mathcal{P}_Q \cup \mathcal{L}_Q$ and $V(H) = \mathcal{P}_H \cup \mathcal{L}_H$ where $\mathcal{P}_Q = \{(p_1, p_2, p_3): p_i\in \mathbb{F}_q\}$, $\mathcal{L}_Q = \{(l_1,l_2, l_3): l_i\in \mathbb{F}_q\}$, $\mathcal{P}_H = \{(p_1, p_2, p_3,p_4, p_5): p_i\in \mathbb{F}_q\}$, and $\mathcal{L}_H = \{(l_1,l_2, l_3, l_4, l_5): l_i\in \mathbb{F}_q\}$. Now define $E(Q)$ by $(p_1,p_2,p_3)\sim (l_1,l_2,l_3)$ if and only if 
\begin{align*}
l_2 - p_2 &= l_1p_1\\
l_3 - 2p_3 &= -2l_1p_2
\end{align*}
and $E(H)$ by $(p_1,p_2,p_3,p_4,p_5)\sim (l_1,l_2,l_3,l_4,l_5)$ if and only if 
\begin{align*}
 l_2 - p_2 &= l_1p_1\\
 l_3 - 2p_3 &= -2l_1p_2\\
 l_4 - 3p_4 &= -3l_1p_3 \\
 2l_5 - 3p_5 &= 3l_3p_2 - 3l_2p_3 + l_4p_1
\end{align*}
The graphs $Q$ and $H$ are $q$-regular bipartite graphs on $2q^3$ and $2q^5$ vertices respectively. In \cite{LazebnikUstimenko1993} (Theorems 2.1 and 2.5), Lazebnik and Ustimenko showed that $Q$ has girth $8$ and $H$ has girth $12$. $Q$ and $H$ are large induced subgraphs of the incidence graph of the generalized quadrangle and generalized hexagon (see also \cite{LazebnikSun2016}). We first show that we may partition $K_{p^3,p^3}$ with disjoint copies of $Q$ and $K_{p^5, p^5}$ with disjoint copies of $H$. 

Let $(\alpha_2, \alpha_3)$ be an arbitrary pair in $\mathbb{F}_q^2$ and $(\beta_2,\beta_3,\beta_4, \beta_5)$ an arbitrary quadruple in $\mathbb{F}_q^4$. Define the graph $Q_{\alpha_2, \alpha_3}$ to be the graph with vertex set $V(Q)$ and $(p_1,p_2,p_3)\sim (l_1,l_2,l_3)$ if and only if 

\begin{align*}
l_2 - (p_2 + \alpha_2) &= l_1p_1\\
l_3 - 2(p_3 + \alpha_3) &= -2l_1(p_2 + \alpha_2)
\end{align*}
and $H_{\beta_2, \beta_3, \beta_4, \beta_5}$ to be the graph with vertex set $V(H)$ and $(p_1,p_2,p_3,p_4, p_5)\sim (l_1,l_2,l_3,l_4,l_5)$ if and only if 
\begin{align*}
 l_2 - (p_2 + \beta_2) &= l_1p_1\\
 l_3 - 2(p_3 + \beta_3) &= -2l_1(p_2 + \beta_2)\\
 l_4 - 3(p_4+\beta_4) &= -3l_1(p_3 + \beta_3)\\
 2l_5 - 3(p_5 +\beta_5) &= 3l_3(p_2 + \beta_2) - 3l_2(p_3 + \beta_3) + l_4p_1
\end{align*}
 Then $Q_{\alpha_2,\alpha_3}$ is isomorphic to $Q$ with the explicit isomorphism given by $(p_1, p_2, p_3) \mapsto (p_1, p_2 - \alpha_2, p_3 - \alpha_3)$ and $(l_1,l_2,l_3) \mapsto (l_1,l_2,l_3)$ and $H_{\beta_2,\beta_3,\beta_4,\beta_5}$ is isomorphic to $H$ with isomorphism given by $(p_1, p_2, p_3, p_4, p_5) \mapsto (p_1, p_2 - \beta_2, p_3 - \beta_3, p_4 - \beta_4, p_5 - \beta_5)$ and $(l_1,l_2,l_3, l_4, l_5) \mapsto (l_1,l_2,l_3, l_4, l_5)$. 
 
 Now we claim that the family $\{Q_{\alpha_2,\alpha_3}\}_{\alpha_i\in \mathbb{F}_q}$ covers $K_{q^3, q^3}$ and $\{H_{\beta_2,\beta_3,\beta_4,\beta_5}\}_{\beta_i \in \mathbb{F}_q}$ covers $K_{q^5, q^5}$. Since each graph is $q$ regular and there are $q^2$ and $q^4$ of them respectively, each cover is also a partition. To show that the edges of the complete bipartite graph are all covered, let $(p_1,p_2,p_3)$ and $(l_1, l_2, l_3)$ be arbitrary and fixed. We must show that there is a choice of $\alpha_2$ and $\alpha_3$ such that 
 
\begin{align*}
l_2 - (p_2 + \alpha_2) &= l_1p_1\\
l_3 - 2(p_3 + \alpha_3) &= -2l_1(p_2 + \alpha_2)
\end{align*}

It is clear that there is a unique solution $\alpha_2, \alpha_3$ to this triangular system. Similarly, for $(p_1, p_2, p_3, p_4, p_5)$ and $(l_1,l_2,l_3,l_4,l_5)$ arbitrary and fixed, there is a unique solution $\beta_2, \beta_3, \beta_4, \beta_5$ to the system 

\begin{align*}
 l_2 - (p_2 + \beta_2) &= l_1p_1\\
 l_3 - 2(p_3 + \beta_3) &= -2l_1(p_2 + \beta_2)\\
 l_4 - 3(p_4+\beta_4) &= -3l_1(p_3 + \beta_3)\\
 2l_5 - 3(p_5 +\beta_5) &= 3l_3(p_2 + \beta_2) - 3l_2(p_3 + \beta_3) + l_4p_1
\end{align*}

Therefore, $K_{q^3, q^3}$ may be covered by $q^2$ graphs each of girth $8$, and $K_{q^5, q^5}$ may be covered with $q^4$ graphs each of girth $12$. Now we must show that we can use a partition of $K_{n,n}$ to find an efficient covering of $K_n$ where we only lose a constant multiplicative factor in the number of graphs used. Li and Lih showed this in \cite{LiLih2009}, and we include the details for completeness. 

By standard results on density of the primes, we have shown above that we may partition $K_{n,n}$ into $(1+o(1))n^{2/3}$ graphs of girth $8$ or $(1+o(1))n^{4/5}$ graphs of girth $12$. Break the vertex set of $K_n$ into two parts $V_1$ and $V_2$ with sizes as equal as possible. We may cover the edges between $V_1$ and $V_2$ with $(1+o(1))(n/2)^{2/3}$ graphs of girth $8$ or $(1+o(1))(n/2)^{4/5}$ graphs of girth $12$. Now break $V_1$ and $V_2$ into equal size pieces $U_1, U_2, U_3, U_4$ each of size $n/4$. Since the disjoint union of two graphs of girth $g$ still has girth $g$, we may cover the edges between $U_1$ and $U_2$ and the edges between $U_3$ and $U_4$ with $(1+o(1))(n/4)^{2/3}$ graphs of girth $8$ or $(1+o(1)(n/4)^{4/5}$ graphs of girth $12$. Repeating this procedure allows us to cover all of the edges in $K_n$ with 
\[
\sum_{i=1}^{O(\log n)} (1+o(1))\left(\frac{n}{2^i}\right)^{2/3} = O(n^{2/3})
\]
graphs of girth $8$ or 
\[
\sum_{i=1}^{O(\log n)} (1+o(1))\left(\frac{n}{2^i}\right)^{4/5} = O(n^{4/5})
\]
graphs of girth $12$.

\end{proof}

We note that this section shows that $R(\{C_3, \cdots C_{2k}\}, s) = \Theta(s^{k/(k-1)})$ for $k\in \{2,3,5\}$ and implies the main result of \cite{LiLih2009}. We also note that we have seen in this section that by Theorem \ref{ross theorem}, giving good lower bounds on degree Ramsey numbers for a graph $F$ can be reduced to giving good lower bounds on the classical Ramsey number for $\mathcal{L}_F$. In the case that $F = K_{a,b}$, we have that $\mathcal{L}_F = \{K_{a,b}\}$. Using the projective norm graphs, Alon, R\'onyai, and Szab\'o \cite{AlonRonyaiSzabo1999} showed that for $a> (b-1)!$, $R(K_{a,b}, s) = \Theta(s^b)$. Along with Theorem \ref{ross theorem}, this shows that for $a>(b-1)!$, one also has $R_\Delta(K_{a,b},s) = \Theta(s^b)$.

\section{Proof of Theorem \ref{ross theorem}}\label{ross section}
Throughout this section, assume that $F$ is a graph with at least one cycle, that $\epsilon > 0$ is fixed, and that the edges of $K_n$ can be partitioned into $O(n^{1-\epsilon})$ graphs which are $\mathcal{L}_F$-free. Call a coloring of a graph a {\em proper rainbow coloring} if the coloring is proper, and the restriction of the coloring to any neighborhood is an injection (ie each vertex sees a rainbow). To prove Theorem \ref{ross theorem} we need the following lemma, which appears in \cite{KangPerarnau2015}. Similar lemmas appear in \cite{PerarnauReed2014} and \cite{MolloySudakovVerstraete2016}.

\medskip
\begin{proof}[Sketch of Proof of Theorem \ref{ross theorem}]
\begin{lemma}\label{ross lemma}
Let $G$ be a graph of sufficiently large maximum degree $\Delta$ and minimum degree $\delta \geq \log^2 \Delta$. Then there is a spanning subgraph $H$ of $G$ and a proper rainbow coloring $\chi(H)$ using at most $200\Delta$ colors such that for all $v\in V(G)$, $d_H(v) \geq \frac{1}{10} d_G(v)$.
\end{lemma}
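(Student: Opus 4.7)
The approach is a random construction combined with the Lov\'asz Local Lemma. I would assign to each edge $e \in E(G)$ an independent uniform color $\chi(e) \in [k]$ with $k = 200\Delta$, and let $H$ consist of those edges $e = uv$ whose color is distinct from every other color at $u$ and every other color at $v$. By construction, the restriction of $\chi$ to $H$ is a proper edge coloring (so trivially rainbow at each vertex) with palette $[k]$, and the substantive content of the lemma is the lower bound on $d_H(v)$.

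For a fixed edge $e = uv$, the probability that $e \in H$ is $(1-1/k)^{d_G(u)+d_G(v)-2} \geq (1-1/k)^{2\Delta-2} \geq e^{-1/100} > 0.99$, so by linearity $\mathbb{E}[d_H(v)] \geq 0.99\, d_G(v)$. The main obstacle is concentration: the ``in $H$'' status of an edge $e \ni v$ depends on the colors of edges at both endpoints of $e$, so $d_H(v)$ is a function of the $\Theta(\Delta^2)$ colors in the closed $1$-neighborhood of $v$, and a naive bounded-differences bound is too weak by a factor of $\Delta$. To obtain a strong tail bound I would expose the randomness in two stages. First reveal $\chi$ on edges incident to $v$: a bounded-differences argument on these $d_G(v)$ variables (each color change flipping the ``uniqueness at $v$'' status of at most two edges) forces the count of ``$v$-unique'' colors to be at least $d_G(v)/2$ except with probability $\exp(-\Omega(d_G(v)))$. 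Then reveal the remaining colors and apply a Chernoff-type estimate to the nearly independent events that each $v$-unique edge is also unique at its far endpoint. Combining these gives
\[
\Pr\!\left[ d_H(v) < d_G(v)/10 \right] \;\leq\; \exp\bigl(-\Omega(d_G(v))\bigr) \;\leq\; \exp\bigl(-\Omega(\log^2 \Delta)\bigr),
\]
using the hypothesis $d_G(v) \geq \delta \geq \log^2 \Delta$.

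Finally, let $B_v$ denote the event $d_H(v) < d_G(v)/10$. Each $B_v$ depends only on colors of edges with at least one endpoint in $N_G[v]$, so $B_v$ and $B_{v'}$ can share variables only if $v$ and $v'$ lie within graph distance $2$; the dependency graph thus has maximum degree at most $\Delta^2$. Since $e(\Delta^2+1)\Pr[B_v] \leq \Delta^3 \exp(-\Omega(\log^2\Delta)) \to 0$ as $\Delta \to \infty$, the symmetric Lov\'asz Local Lemma produces with positive probability a coloring for which no $B_v$ occurs, delivering the required $H$ and $\chi$.
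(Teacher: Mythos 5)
Your construction colors the \emph{edges} of $G$, but the lemma calls for a \emph{vertex} coloring, and this is not a pedantic distinction. The paper's definition of a proper rainbow coloring (``the restriction of the coloring to any neighborhood is an injection, i.e.\ each vertex sees a rainbow'') refers to a proper vertex coloring that is additionally injective on every open neighborhood, and the usage in Proposition \ref{large piece} forces this reading: there one sets $uv\in E(H_i)$ if and only if $\chi(u)\chi(v)\in E(G_i)$, so $\chi$ must map vertices of $H$ to vertices of $K_{200\Delta}$, and the crux of the argument is that $\chi$ is then a locally injective graph homomorphism from $H_i$ to $G_i$. An edge coloring supplies no such vertex map, so the object you build cannot be fed into the rest of the proof. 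A further warning sign is that under your interpretation the lemma is trivially true with no randomness at all: by Vizing's theorem one may take $H=G$ and properly edge-color it with $\Delta+1\leq 200\Delta$ colors, so the statement would have no content.

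The intended statement is genuinely harder: a proper vertex coloring that is injective on every neighborhood is a proper coloring of the square $H^2$, and for general $G$ of maximum degree $\Delta$ this can require $\Theta(\Delta^2)$ colors (incidence graphs of projective planes are the standard example), which is precisely why one must discard edges and settle for a subgraph $H$ retaining a tenth of each degree. The paper does not prove the lemma (it cites Kang and Perarnau), but the proof runs structurally parallel to what you wrote, with the randomness placed on vertices: color each vertex uniformly from $[200\Delta]$, delete every edge $uv$ with $\chi(u)=\chi(v)$ or with $\chi(u)$ repeated in $N(v)$ or $\chi(v)$ repeated in $N(u)$, check that each vertex expects to lose only about $d_G(v)/100$ of its edges, and finish with concentration plus the Local Lemma using $\delta\geq\log^2\Delta$. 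If you transplant your argument to that setting it can be made to work, but two soft spots in your sketch carry over: in your second exposure stage the sum of the ``unique at the far endpoint'' indicators still depends on up to $\Delta^2$ underlying random colors (the events for neighbors $u,u'$ of $v$ share the variable attached to the edge $uu'$), so plain bounded differences is again off by a factor of $\Delta$ and you need Talagrand's inequality or a read-$k$ concentration bound; and two events $B_v,B_{v'}$ can share a variable whenever $d(v,v')\leq 3$, not $2$, though a dependency degree of $\Delta^3$ still loses to $\exp(-\Omega(\log^2\Delta))$.
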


This lemma allows us to use the partition of $K_n$ to partition a large piece of our graph $G$ (viz $H$) into $F$-free subgraphs.

\begin{proposition}\label{large piece}
Let $G$ be a graph of sufficiently large maximum degree $\Delta$ and minimum degree $\delta \geq \log^2 \Delta$. There exist $l = O(\Delta^{1-\epsilon})$ disjoint spanning subgraphs $H_1, \cdots, H_l$, all of which are $F$-free, such that for all $v\in V(G)$
\[
\sum_{i=1}^l d_{H_i}(v) \geq \frac{1}{10} d_G(v).
\]
\end{proposition}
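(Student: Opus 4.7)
The plan is to use Lemma \ref{ross lemma} as a bridge from $G$ to the standing hypothesis that $E(K_n)$ can be partitioned into $O(n^{1-\epsilon})$ $\mathcal{L}_F$-free graphs. First I would apply Lemma \ref{ross lemma} to $G$, obtaining a spanning subgraph $H \subseteq G$ with $d_H(v) \geq \frac{1}{10}d_G(v)$ for every $v$, together with a proper rainbow coloring $\chi : V(G) \to [c]$ using at most $c \leq 200\Delta$ colors. Since $\Delta$ is sufficiently large, I can then invoke the hypothesis with $n = c$ to partition $E(K_c)$ into $l = O(c^{1-\epsilon}) = O(\Delta^{1-\epsilon})$ subgraphs $G_1, \ldots, G_l$, each $\mathcal{L}_F$-free, where $V(K_c) = [c]$ is identified with the color set.

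Next I would define $H_1, \ldots, H_l$ as spanning subgraphs of $G$ by placing an edge $uv \in E(H)$ into $H_i$ exactly when $\{\chi(u), \chi(v)\} \in E(G_i)$. Properness of $\chi$ guarantees $\chi(u) \neq \chi(v)$ on every edge of $H$, so this assignment is well-defined, and since the $G_i$ partition $E(K_c)$, the $H_i$ partition $E(H)$. Consequently,
\[
\sum_{i=1}^l d_{H_i}(v) = d_H(v) \geq \frac{1}{10}d_G(v)
\]
for every vertex $v$, which gives the degree inequality claimed in the proposition.

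It remains to show that each $H_i$ is $F$-free, and this is where the rainbow property becomes essential. If a copy of $F$ sat inside some $H_i$, the restriction of $\chi$ to the vertex set of this copy would give a map $\phi : V(F) \to [c]$ sending every edge of $F$ to an edge of $G_i$ by construction, i.e.\ a graph homomorphism from $F$ into $G_i$. The rainbow condition forces $\chi$ to be injective on $N_H(v)$ for every $v \in V(G)$, so in particular $\phi$ is injective on each $N_F(v)$, making $\phi$ locally injective. This would place $G_i$ in $\mathcal{L}_F$, contradicting its defining property. The real conceptual work is therefore packaged into Lemma \ref{ross lemma}: producing a rainbow coloring that uses only $O(\Delta)$ colors while preserving a constant fraction of each vertex's degree is the main technical obstacle (and the place where the hypothesis $\delta \geq \log^2 \Delta$ is consumed). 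Once that lemma is in hand, this proposition reduces to the pigeonhole-style reduction just sketched, which converts $F$-freeness in subgraphs of $G$ to $\mathcal{L}_F$-freeness in subgraphs of $K_{O(\Delta)}$.
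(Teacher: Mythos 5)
Your proposal is correct and follows essentially the same route as the paper: apply Lemma \ref{ross lemma} to get $H$ and the rainbow coloring $\chi$, pull back the $\mathcal{L}_F$-free partition of $K_{O(\Delta)}$ along $\chi$ to partition $E(H)$, and observe that $\chi$ restricted to each $H_i$ is a locally injective homomorphism into $G_i$, so a copy of $F$ in $H_i$ would contradict $G_i$ being $\mathcal{L}_F$-free. The paper phrases the last step by directly asserting that $\chi$ is a locally injective homomorphism from $H_i$ to $G_i$, but the content is identical to your contradiction argument.
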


\begin{proof}
Recall that the edge set of $K_{200\Delta}$ can be partitioned into $l = O(\Delta^{1-\epsilon})$ graphs which are $\mathcal{L}_F$-free. Denote these graphs by $G_1,\cdots, G_l$. Let $H$ be the spanning subgraph of $G$ with coloring $\chi$ from Lemma \ref{ross lemma}. Recall that $\chi$ is a proper rainbow coloring using at most $200\Delta$ colors and that $d_H(v) \geq \frac{1}{10}d_G(v)$ for all $v$. For $1\leq i\leq l$, define graphs $H_i$ which are subgraphs of $H$ by $V(H_i) = V(G)$ and $uv\in E(H_i)$ if and only if
\[
\mbox{$\chi(u)\chi(v) \in E(G_i)$ and $uv\in E(H)$}.
\]

Since $G_1,\cdots, G_l$ is a partition of $E(K_{200\Delta})$, we have that $H_1,\cdots, H_l$  is a partition of $H$, and thus the minimum degree condition is satisfies. To see that each $H_i$ is $F$-free, we claim that for each $i$, $\chi$ is a locally injective homomorphism from $H_i$ to $G_i$. To see this, note that the definition of $E(H_i)$ guarantees that $\chi$ is a homomorphism from $H_i$ to $G_i$, and $\chi$ being a rainbow coloring implies that the homomorphism is locally injective. Since $G_i$ is $\mathcal{L}_F$-free, we have that $H_i$ is $F$-free.
\end{proof}

Let $G_1$ be the graph obtained from $G$ by sequentially removing any vertex of degree less than $\log^2\Delta$, and let $G_2$ be the graph whose edges are $E(G)\setminus E(G_1)$. Since $G_2$ has degeneracy at most $\log^2\Delta$, it has arboricity at most $\log^2 \Delta$ and so we may partition $E(G_2)$ into that many forests. Since $F$ contains a cycle, each of these are $F$-free. Now we may apply Proposition \ref{large piece} to $G_1$, and have therefore covered a large piece of $G$ with at most $O(\Delta^{1-\epsilon}) + \log^2 \Delta$ graphs which are $F$-free. Removing these edges decreases the maximum degree by a multiplicative factor of at least $\frac{9}{10}$. We may repeat this procedure until the maximum degree of part of the graph which is not yet covered is less than $\log^2 \Delta$. Since the maximum degree decreases by a constant multiplicative factor at each step, this will take at most $O(\log \Delta)$ steps. Therefore, the total number of graphs used to cover $E(G)$ is 
\[
\sum_{i=0}^{O(\log \Delta)} 200\left(\left(\frac{9}{10}\right)^i \Delta\right)^{1-\epsilon} + \log^2\Delta = O\left(\Delta^{1-\epsilon}\right) + O(\log^3 \Delta).
\]

\end{proof}

\begin{proof}[Proof of Theorem \ref{general cycle}]
Let $k$ be fixed and let $\delta = 0$ if $k$ is odd and $1$ if $k$ is even. Showing that 
\[
R_\Delta(C_{2k},s) = \Omega\left( \left(\frac{s}{\log s}\right)^{1+ \frac{2}{3k - 5 + \delta}}\right)
\]
is equivalent to showing that any graph of maximum degree $\Delta$ can be partitioned into $O(\Delta^{1- \frac{2}{3k-3+\delta}} \log \Delta)$ graphs each with no copy of $C_{2k}$. By the same argument in the proof of Theorem \ref{main theorem} and by Theorem \ref{ross theorem}, it suffices to show that $K_n$ can be partitioned into $O(n^{1- \frac{2}{3k-3+\delta}} \log n)$ graphs of girth greater than $2k$. Lazebnik, Ustimenko, and Woldar \cite{LazebnikUstimenkoWoldar1995} showed that there are graphs on $n$ vertices and $\epsilon_kn^{1+\frac{2}{3k-3+\delta}}$ edges that have girth at least $2k+2$, where $\epsilon_k$ is a constant depending only on $k$. For $C$ a constant to be chosen later, place $Cn^{1- \frac{2}{3k-3+\delta}} \log n$ copies of this graph onto $K_n$, each time permuting the vertices with a permutation $\sigma \in S_n$ chosen uniformly at random and independently. For each pair $u,v$, let $X_{u,v}$ be the random variable that counts how many times the edge $uv$ in $K_n$ is covered. We are done if we can show that all of the $X_{uv}$ are positive with positive probability. Since the expected value of each $X_{uv}$ is greater than $\epsilon_kC\log n$, the Chernoff bound (cf \cite{AlonSpencer2000}) gives that 
\[
\mathbb{P}(X_{uv} = 0) \leq \mathrm{exp}\left( \frac{ - \epsilon_k C\log n}{3}\right).
\]
For $C$ large enough this is $o(n^{-2})$, and the union bound gives that every edge is covered with probability tending to $1$.
\end{proof}

Indeed, this proof shows a more general theorem:

\begin{theorem}
Let $F$ be a fixed graph containing at least one cycle and let $\mathrm{ex}(n, \mathcal{L}_F) = \Omega( n^{1+\eta})$. Then if $\eta < 1$,
\[
R_\Delta(F, s) = \Omega \left( \frac{s}{\log s}\right)^{\frac{1}{1-\eta}},
\]
and if $\eta= 1$, then 
\[
R_\Delta(F,s) = 2^{\Omega(s^{1/4})}.
\]
\end{theorem}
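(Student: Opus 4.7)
The proof follows the same strategy as that of Theorem \ref{general cycle}, but carried out for an arbitrary forbidden graph $F$. The plan is to build an efficient partition of $E(K_n)$ into $\mathcal{L}_F$-free graphs via a random union of vertex-permuted copies of an extremal $\mathcal{L}_F$-free graph, and then plug this partition into Theorem \ref{ross theorem}.

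By hypothesis there is a constant $c = c(F) > 0$ and an $\mathcal{L}_F$-free graph $G_0$ on $n$ vertices with at least $cn^{1+\eta}$ edges. We apply $T$ independent uniformly random permutations $\sigma_1,\ldots,\sigma_T \in S_n$ to the vertex set of $G_0$, producing $T$ copies, and let $X_{uv}$ count the number of these copies that contain the edge $uv \in K_n$. Then $\expect[X_{uv}] = T\cdot|E(G_0)|/\binom{n}{2} = \Theta(Tn^{\eta-1})$. Choose $T$ so that $\expect[X_{uv}] \geq C\log n$ for a large constant $C$: thus $T = \Theta(n^{1-\eta}\log n)$ when $\eta < 1$ and $T = \Theta(\log n)$ when $\eta = 1$. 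The Chernoff bound gives $\prob(X_{uv}=0) \leq \exp(-\Omega(C\log n)) = o(n^{-2})$ for $C$ sufficiently large, and a union bound over the $\binom{n}{2}$ edges of $K_n$ produces, with positive probability, a cover of $K_n$ by $T$ copies of $G_0$. Assigning each multiply-covered edge to just one of its covers converts this into a partition of $E(K_n)$ into $T$ $\mathcal{L}_F$-free subgraphs.

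Inserting this partition into the iterative argument in the proof of Theorem \ref{ross theorem}, a graph $G$ of maximum degree $\Delta$ gets partitioned into $O(\Delta^{1-\eta}\log\Delta)$ $F$-free pieces when $\eta<1$, and into $O(\log^{3}\Delta)$ $F$-free pieces when $\eta=1$ (the latter dominated by the arboricity term of order $\log^{3}\Delta$). Setting this number equal to $s$ and inverting yields $\Delta = \Omega((s/\log s)^{1/(1-\eta)})$ in the first case and $\Delta = 2^{\Omega(s^{1/3})} \geq 2^{\Omega(s^{1/4})}$ in the second, which is the desired conclusion.

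The main technical wrinkle to verify is that the extra $\log n$ factor from the random partition does not inflate the iteration. Since the current maximum degree drops by a factor of $9/10$ at each of the $O(\log\Delta)$ iterations of the Kang--Perarnau argument, the relevant sum $\sum_{i} \Delta_i^{1-\eta}\log\Delta_i$ converges geometrically to $O(\Delta^{1-\eta}\log\Delta)$ when $\eta<1$; when $\eta=1$, the sum $\sum_{i} \log\Delta_i$ is $O(\log^{2}\Delta)$ and is in turn dominated by the $O(\log^{3}\Delta)$ arboricity term appearing in the same proof.
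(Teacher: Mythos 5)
Your proposal is correct and follows essentially the same route as the paper, which proves this theorem by the identical random-permutation-plus-Chernoff construction used for Theorem \ref{general cycle} and then feeds the resulting $O(n^{1-\eta}\log n)$-part (resp.\ $O(\log n)$-part) partition of $K_n$ into the iteration of Theorem \ref{ross theorem}. You in fact supply details the paper leaves implicit (tracking the extra $\log$ factor through the iteration), and your $2^{\Omega(s^{1/3})}$ bound in the $\eta=1$ case is consistent with, and slightly stronger than, the stated $2^{\Omega(s^{1/4})}$.
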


\section{Concluding Remarks}
Determining the order of magnitude for $R_\Delta(C_{2k}, s)$ for $k\not\in \{2,3,5\}$ is out of reach with the current state of knowledge, as any improvement to the best-known exponents would yield a corresponding improvement in the best-known exponents for $\mathrm{ex}(n, C_{2k})$. One should be able to remove the logarithmic factor in the denominator of Theorem \ref{general cycle} but we could not see an easy way to do this. Probably the most interesting open question in the area of degree Ramsey numbers is whether or not $R_\Delta(G, s)$ is bounded by some function of $\Delta(G)$ and $s$. Horn, Milans, and R\"odl \cite{HornMilansRodl2014} showed that this is true for the family of closed blowups of trees. However, it is not clear that it should be true for general $G$.

\section*{Acknowledgments}
The author would like to thank Boris Bukh, Ross Kang, and Felix Lazebnik for helpful discussions.

\bibliographystyle{plain}
\bibliography{bib}

\end{document}